\newtheorem{theorem}{Theorem}[section]
\newtheorem{lemma}[theorem]{Lemma}
\newtheorem{corollary}[theorem]{Corollary}
\theoremstyle{definition}
\newtheorem{definition}[theorem]{Definition}
\newtheorem{example}[theorem]{Example}
\newlength{\cellsize}
\newcommand\tableau[1]{
\vcenter{
\let\\=\cr
\baselineskip=-16000pt
\lineskiplimit=16000pt
\lineskip=0pt
\halign{&\tableaucell{##}\cr#1\crcr}}}
\newcommand{\tableaucell}[1]{{%
\def \arg{#1}\def \void{}%
\ifx \void \arg
\vbox to \cellsize{\vfil \hrule width \cellsize height 0pt}%
\else
\unitlength=\cellsize
\begin{picture}(1,1)
\put(0,0){\makebox(1,1)[c]{$#1$}}
\put(0,0){\line(1,0){1}}
\put(0,1){\line(1,0){1}}
\put(0,0){\line(0,1){1}}
\put(1,0){\line(0,1){1}}
\end{picture}%
\fi}}
\DeclareMathOperator{\sgn}{sgn}
\DeclareMathOperator{\comp}{comp}
\DeclareMathOperator{\set}{set}
\newcommand{\dI}{\mathfrak{S}^*}
\newcommand{\rdI}{\mathcal{R}\mathfrak{S}^*}
\DeclareMathOperator{\QSym}{QSym}
\DeclareMathOperator{\NSym}{NSym}
\DeclareMathOperator{\Des}{Des}
\DeclareMathOperator{\negc}{neg}
\DeclareMathOperator{\tail}{tail}
\newcommand{\Qsym}{\ensuremath{\operatorname{QSym}}}
\newcommand{\Nsym}{\ensuremath{\operatorname{NSym}}}
\newcommand{\nce}{\mathbf{e}}         	
\newcommand{\nch}{\mathbf{h}}         	
\newcommand{\nci}{{\mathfrak{S}}}	         
\newcommand{\ncri}{\mathcal{R}{\mathfrak{S}}} 
\newcommand\cellify[1]{\def\thearg{#1}\def\nothing{}%
\ifx\thearg\nothing
\vrule width0pt height\cellsize depth0pt\else
\hbox to 0pt{\usebox2\hss}\fi%
\vbox to 15\unitlength{
\vss
\hbox to 15\unitlength{\hss$#1$\hss}
\vss}}
\newcommand\expath[1]{%
\hbox to 0pt{\usebox3\hss}%
\vbox to 15\unitlength{
\vss
\hbox to 15\unitlength{\hss$#1$\hss}
\vss}}
\newcommand\bas[1]{\omit \vbox to \cellsize{ \vss \hbox to \cellsize{\hss$#1$\hss} \vss}}
\newcommand{\I}{{\mathfrak{S}}}
\newcommand{\svw}[1]{\textcolor{black}{#1}}
\newcommand{\sw}[1]{\textcolor{black}{#1}}
\author[Niese, Sundaram, van Willigenburg, Wang]{Elizabeth Niese, Sheila Sundaram,\\Stephanie van Willigenburg, Shiyun Wang}
\address{Elizabeth Niese: Marshall University, Huntington, WV 25755, USA} 
\email{elizabeth.niese@gmail.com}
\address{Sheila Sundaram: Pierrepont School, Westport, CT 06880, USA}
\email{shsund@comcast.net}
\address{Stephanie van Willigenburg: University of British Columbia, Vancouver, BC V6T 1Z2, Canada}
\email{steph@math.ubc.ca}
\address{Shiyun Wang: University of Southern California, Los Angeles, CA 90089-2532, USA}
\email{shiyunwa@usc.edu}
\title[skew dual immaculate Pieri rules]{Pieri rules for skew dual immaculate functions}
\date{\today}
\begin{document}
\subjclass{05E05, 05E10, 16T05, 16T30, 16W55.}
\keywords{dual immaculate function, Hopf algebra, Pieri rule, row-strict, skew function.}

\maketitle

\begin{abstract}
In this paper we give Pieri rules for skew dual immaculate functions and their recently discovered row-strict counterparts. We establish our rules using a right-action analogue of the skew Littlewood-Richardson rule for Hopf algebras of Lam-Lauve-Sottile. We also obtain Pieri rules for row-strict (dual) immaculate functions.
\end{abstract}


\section{Introduction}\label{sec:intro}
Schur-like functions are a new and flourishing area since the discovery of quasisymmetric Schur functions in 2011 \cite{HLMvW2011}, which led to numerous other similar  functions being discovered, for example \cite{ALvW2021, AS2019, BBSSZ2014, CFLSX2014, LMvW2013, MN2015, MN2018, MR2014}. In essence, Schur-like functions are functions that refine the ubiquitous Schur functions and reflect many of their properties, such as their combinatorics \cite{AHM2018, BLvW2011}, their representation theory \cite{BS2021, BBSSZ2015, S2020, TvW2015}, and in the case of quasisymmetric Schur functions have already been applied to resolve conjectures \cite{LM2011}. Of the various Schur-like functions to arise after the quasisymmetric Schur functions, two were naturally related to them: the dual immaculate functions \cite{BBSSZ2014} and the row-strict quasisymmetric Schur functions \cite{MR2014}. Recently a fourth basis that interpolates between these latter two bases, the \emph{row-strict dual immaculate functions}, was discovered \cite{NSvWVW2022}, thus completing the picture. The representation theory of these functions was revealed in \cite{NSvWVW2022b}, in addition to the fundamental combinatorics in \cite{NSvWVW2022}. In this paper we extend the combinatorics to uncover skew Pieri rules in the spirit of \cite{AM2011, LLS2011, TvW2018} for both row-strict and classical dual immaculate functions. 

More precisely, our paper is structured as follows. In Section~\ref{sec:rightHopfLR} we establish a right-action analogue of  \cite[Theorem 2.1]{LLS2011} in Theorem~\ref{thm:genskewLR}. We then recall required background for the Hopf algebras of quasisymmetric functions, $\Qsym$, and noncommutative symmetric functions, $\Nsym$, in Section~\ref{sec:QSymNSym}. Finally, in Section~\ref{sec:Pieri} we give (left) Pieri rules for row-strict immaculate functions and row-strict dual immaculate functions  in Corollaries~\ref{cor:rsimmleftpieri} and \ref{cor:rsdimmleftpieri}, respectively. Our final theorem is  Theorem~\ref{thm:skewpieri}, in which we establish Pieri rules for skew dual immaculate functions, and row-strict skew dual immaculate functions.

\section{The right-action skew Littlewood-Richardson rule for Hopf algebras}\label{sec:rightHopfLR}
We begin by recalling and deducing general Hopf algebra results that will be useful later.
Following Tewari and van \svw{Willigenburg}~\cite{TvW2018}, let $H$ and $H^*$ be a pair of dual Hopf algebras over a field $k$ with duality pairing $\langle \ , \ \rangle: H \otimes H^* \rightarrow k$ for which the structure of $H^*$ is dual to that of $H$ and vice versa. Let $h\in H, a\in H^*$. By Sweedler notation, we have coproduct denoted by $\Delta h=\sum h_1\otimes h_2$, and similarly $h_1h_2 = h_1\cdot h_2$ denotes product. We define \svw{the} action of one algebra on the other one \svw{by the following.} 
\begin{equation}\label{eq:action1}
h\rightharpoonup a=\sum\langle h, a_2\rangle a_1
\end{equation}  
\begin{equation}a\rightharpoonup h=\sum\langle h_2, a\rangle h_1
\end{equation}

Let $S:H\rightarrow H$ denote the antipode map.  Then for $\Delta h= \sum h_1\otimes h_2$, 
\begin{equation}\label{eq:antipode}\sum(Sh_1)h_2=\varepsilon(h)1_H=\sum h_1(Sh_2),\end{equation}where $\varepsilon$ and $1$ denote counit and unit, respectively.
Following Montgomery \cite{Montgomery1993}, we can define the convolution product $*$ for $f$ and $g$ in $H$ by
\[(f * g)(a)=\sum \langle f,a_1\rangle \langle g,a_2\rangle =\langle fg,a\rangle.\]
Then it follows that 
\[\langle g,f\rightharpoonup a\rangle=\langle gf,a\rangle.\]

Similarly, $\langle a\rightharpoonup f,b\rangle = \langle f,ba\rangle.$
Since $H^*$ is a left $H$-module algebra under $\rightharpoonup$, we have that 
\[h\rightharpoonup(a\cdot b)=\sum (h_1\rightharpoonup a)\cdot(h_2\rightharpoonup b).\]

\begin{lemma}(\cite{LLS2011})\label{lem:antipode}
For $g,h\in H$ and $a\in H^*$, \[(a\rightharpoonup g)\cdot h=\sum(S(h_2)\rightharpoonup a)\rightharpoonup (g\cdot h_1)\]
where $S: H\rightarrow H$ is the antipode.
\end{lemma}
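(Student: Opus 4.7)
The plan is to verify the identity by pairing both sides with an arbitrary $b \in H^*$ and invoking nondegeneracy of $\langle\cdot,\cdot\rangle$. Three duality identities do all the work: the pairing axiom $\langle xy, c\rangle = \sum \langle x, c_1\rangle \langle y, c_2\rangle$ implicit in the excerpt's convolution formula, the identity $\langle a\rightharpoonup f, c\rangle = \langle f, ca\rangle$ stated in the excerpt, and its mirror $\langle x, h\rightharpoonup a\rangle = \langle xh, a\rangle$, which follows at once from the definition~\eqref{eq:action1} combined with the pairing axiom.

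First I would unpack the LHS: apply the pairing axiom to the product $(a\rightharpoonup g)\cdot h$ paired with $b$, remove the $\rightharpoonup$ using the stated identity, then split $b_1 a$ by the pairing axiom once more. Recombining yields
\[
\langle (a\rightharpoonup g) \cdot h, b\rangle = \sum \langle g_1 h, b\rangle \langle g_2, a\rangle,
\]
the sum running over the Sweedler decomposition of $g$.

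Next I would unpack the RHS in parallel. The outer $\rightharpoonup$ becomes a pairing of $g h_1$ against the $H^*$-product $b\cdot (S(h_2)\rightharpoonup a)$; splitting this product via the pairing axiom, eliminating the inner $\rightharpoonup$ via the mirror identity, and using the bialgebra compatibility $\Delta(g h_1) = \Delta g \cdot \Delta h_1$ together with coassociativity (writing $\Delta^2 h = \sum h_1 \otimes h_2 \otimes h_3$) produces
\[
\sum \langle g_1 h_1, b\rangle \langle g_2 h_2 S(h_3), a\rangle.
\]
The crux is then to recognize that the antipode axiom~\eqref{eq:antipode}, applied to the coproduct of the middle factor of $\Delta^2 h$ and combined with the counit axiom, gives $\sum h_1 \otimes h_2 S(h_3) = h \otimes 1_H$. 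Substituting this collapses the RHS to $\sum \langle g_1 h, b\rangle \langle g_2, a\rangle$, matching the LHS.

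The main obstacle is purely notational: keeping Sweedler indices organized through the iterated coproducts of $h$ and identifying on which pair of factors the antipode axiom must act. Once the three duality identities are in hand, the rest is a sequence of coassociativity/counit bookkeeping steps with no additional conceptual input; the entire argument is symmetric in its treatment of the two sides, with the antipode collapse being the one place where \eqref{eq:antipode} enters.
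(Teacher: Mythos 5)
Your proof is correct: pairing both sides against an arbitrary $b\in H^*$, the adjointness identities $\langle a\rightharpoonup f,b\rangle=\langle f,ba\rangle$ and $\langle x,h\rightharpoonup a\rangle=\langle xh,a\rangle$, coassociativity, and the collapse $\sum h_1\otimes h_2S(h_3)=h\otimes 1_H$ from \eqref{eq:antipode} and the counit axiom do reduce both sides to $\sum\langle g_1h,b\rangle\langle g_2,a\rangle$, and you were careful to keep the product in the noncommutative algebra $H^*$ in the correct order $b\cdot(S(h_2)\rightharpoonup a)$, which is the one place such an argument can silently go wrong. Note, however, that the paper does not prove this lemma at all --- it is quoted from Lam--Lauve--Sottile --- and what it does prove is the mirrored right-action statement, Lemma~\ref{lem:raction}, by a more structured route: it first establishes the module-algebra property of the right action (Lemma~\ref{lem:rightactprod}), the counit fact (Lemma~\ref{lem:id}), and the element-level identity $a\cdot(h\leftharpoonup b)=\sum h_1\leftharpoonup((S(h_2)\leftharpoonup a)\cdot b)$ (Lemma~\ref{lem:product}), and only then does a short dual-pairing computation. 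Your argument inlines all of that into a single pairing calculation against $b$, so it is more self-contained and uses nothing beyond the duality adjunctions and the Hopf axioms (plus nondegeneracy of the graded pairing, which holds here); the paper's factored approach instead isolates reusable intermediate identities, with the antipode axiom entering at the element level in Lemma~\ref{lem:product} rather than inside a pairing. Mirrored left-to-right, your computation would equally well prove Lemma~\ref{lem:raction} directly, so the two approaches are genuinely interchangeable in this setting.
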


As in Montgomery \cite{Montgomery1993}, define a right action by \svw{the following}.
\begin{equation}\label{eq:rightaction}
h\leftharpoonup a=\sum \langle h,a_1\rangle a_2
\end{equation}
\begin{equation}\label{eq:rightaction2}
a\leftharpoonup h = \sum \langle h_1,a\rangle h_2
\end{equation}
As before, it follows that $\langle g,f\leftharpoonup a\rangle=\langle fg,a\rangle$ and $\langle a\leftharpoonup f,b\rangle=\langle f,ab\rangle$.

\begin{lemma}\label{lem:rightactprod} Let $f \in H$ and $a,b \in H^*$.  Then 
\[f\leftharpoonup a\cdot b= \sum (f_1\leftharpoonup a)\cdot (f_2 \leftharpoonup b).\]
\end{lemma}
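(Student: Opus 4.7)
The plan is to verify the identity by pairing both sides with an arbitrary element $g\in H$ and appealing to two pairing identities already established in the text: (i) $\langle g, f\leftharpoonup a\rangle=\langle fg,a\rangle$, and (ii) the duality compatibility $\langle xy,\alpha\rangle=\langle x\otimes y,\Delta\alpha\rangle$ for $x,y\in H$, $\alpha\in H^*$ (equivalently, $\langle g,\alpha\beta\rangle=\langle\Delta g,\alpha\otimes\beta\rangle$). Since the pairing is nondegenerate, it is enough to check that the two expressions agree after being paired against every $g\in H$.

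For the left-hand side, I would apply identity (i) followed by (ii) to get
\[
\langle g, f\leftharpoonup(a\cdot b)\rangle = \langle fg, a\cdot b\rangle = \sum \langle (fg)_1,a\rangle\,\langle (fg)_2,b\rangle,
\]
and then invoke the bialgebra axiom that $\Delta$ is an algebra homomorphism on $H$, so that $\Delta(fg)=\sum f_1g_1\otimes f_2g_2$. The expression thus collapses to $\sum\langle f_1g_1,a\rangle\,\langle f_2g_2,b\rangle$.

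For the right-hand side, I would expand the product in $H^*$ using identity (ii) applied to $g$, then apply identity (i) to each tensor factor:
\[
\Big\langle g,\sum(f_1\leftharpoonup a)\cdot(f_2\leftharpoonup b)\Big\rangle = \sum\langle g_1,f_1\leftharpoonup a\rangle\,\langle g_2,f_2\leftharpoonup b\rangle = \sum\langle f_1g_1,a\rangle\,\langle f_2g_2,b\rangle,
\]
which matches the left-hand side. Nondegeneracy of the pairing then gives the equality in $H^*$.

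There is no substantive obstacle; the only care required is Sweedler-notation bookkeeping and making sure that the two pairing identities are invoked in the correct direction, with $\Delta(fg)=\Delta f\cdot\Delta g$ playing the role that matches the two coproducts on each side. This statement is the right-action mirror of the standard fact that $H^*$ is a left $H$-module algebra under $\rightharpoonup$ (used earlier in the section), and the proof follows the same template with arrows reversed.
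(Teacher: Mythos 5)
Your proof is correct and follows essentially the same route as the paper: pair both sides against an arbitrary $g \in H$, use the adjointness identity $\langle g, f\leftharpoonup a\rangle = \langle fg,a\rangle$ together with the bialgebra compatibility $\Delta(fg)=\sum f_1g_1\otimes f_2g_2$, and conclude by nondegeneracy of the pairing. The only cosmetic difference is that the paper routes the left-hand computation through $\langle fg,ab\rangle = \langle a\leftharpoonup (fg), b\rangle$ and the definition of $\leftharpoonup$, rather than invoking the product--coproduct duality directly, which is the same calculation.
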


\begin{proof} 
Let $f,g \in H$ and $a,b \in H^*$.  Then 
\begin{align*}
\langle g,f\leftharpoonup (a\cdot b)\rangle &= \langle fg,ab\rangle \\
&=\langle a \leftharpoonup (fg),b\rangle\\
&=\sum\langle f_1g_1,a\rangle \langle f_2g_2,b\rangle\\
&=\sum \langle g_1,f_1\leftharpoonup a\rangle \langle g_2,f_2\leftharpoonup b\rangle \\
&=\sum \langle g, (f_1\leftharpoonup a)\cdot (f_2\leftharpoonup b)\rangle.
\end{align*}
Thus $f \leftharpoonup a\cdot b = \sum (f_1\leftharpoonup a)\cdot (f_2\leftharpoonup b)$.\qedhere
\end{proof}

\begin{lemma}\label{lem:id}
Let $a \in H^*$.  Then 
\[\varepsilon(h)\cdot 1_H \leftharpoonup a = a\] for any $h \in H$.  
\end{lemma}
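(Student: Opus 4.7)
The plan is to reduce the claim to the identity $1_H \leftharpoonup a = a$, and then to verify this directly from definition \eqref{eq:rightaction} together with the compatibility of the duality pairing with the unit of $H$ and the counit of $H^*$. Since $\leftharpoonup$ is bilinear (inherited from bilinearity of the pairing), the scalar $\varepsilon(h)\in k$ factors out of the left slot, so
\[
(\varepsilon(h)\cdot 1_H)\leftharpoonup a \;=\; \varepsilon(h)\cdot (1_H \leftharpoonup a),
\]
and it suffices to establish that $1_H$ acts as the identity on $H^*$. This reading also fits with the way this lemma is likely to be used: in the antipode identity \eqref{eq:antipode}, the expression $\sum(Sh_1)h_2$ is rewritten as $\varepsilon(h)\cdot 1_H$, and the lemma allows one to replace $(\varepsilon(h)\cdot 1_H)\leftharpoonup a$ by $a$ (modulo the overall scalar $\varepsilon(h)$).

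To prove $1_H \leftharpoonup a = a$, I would unpack \eqref{eq:rightaction} to get
\[
1_H \leftharpoonup a \;=\; \sum \langle 1_H, a_1\rangle\, a_2.
\]
Hopf-algebra duality forces $\langle 1_H, b\rangle = \varepsilon(b)$ for every $b\in H^*$, since the unit of $H$ is dual to the counit of $H^*$. Substituting into the display and applying the left counit axiom on $H^*$ to $\Delta a = \sum a_1\otimes a_2$ gives
\[
\sum \varepsilon(a_1)\, a_2 \;=\; a,
\]
which is the identity we needed.

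There is no serious obstacle; the argument is bookkeeping against the definitions recalled in the preceding subsection. The only care required is (a) to invoke the unit/counit compatibility of the pairing at the correct step, and (b) to apply the counit axiom on the first tensorand of $\Delta a$, since that is the tensorand that enters the pairing in \eqref{eq:rightaction} — not the right counit identity, which would apply if the definition of $\leftharpoonup$ had instead paired $h$ with $a_2$.
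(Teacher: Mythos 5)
Your proof is correct and follows essentially the same route as the paper: expand $1_H \leftharpoonup a$ via \eqref{eq:rightaction} and use the unit--counit compatibility of the pairing to reduce to the counit axiom $\sum \varepsilon(a_1)a_2 = a$. Your version is in fact slightly more careful than the paper's terse ``only nonzero when $a_1 = 1_{H^*}$'' step, and your remark that the statement should be read with the scalar $\varepsilon(h)$ factored out (which recombines via $\sum \varepsilon(h_1)h_2 = h$ where the lemma is applied) is the right reading.
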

\begin{proof}
Let $a \in H^*$ and $h\in H$.  Then 
\[\varepsilon(h)\cdot 1_H \leftharpoonup a  = \sum \langle \varepsilon(h)\cdot 1_H,a_1\rangle a_2.
\]
This is only nonzero when $a_1=1_{H^*}$.  
\end{proof}

\begin{lemma}\label{lem:product} Let $h \in H$ and $a,b \in H^*$.  Then 
\[a\cdot(h\leftharpoonup b) = \sum h_1 \leftharpoonup ( (S(h_2)\leftharpoonup a)\cdot b).\]
\end{lemma}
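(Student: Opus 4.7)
The plan is to start from the right-hand side and reduce it to $a \cdot (h \leftharpoonup b)$ by applying the preceding lemmas together with the antipode identity~(\ref{eq:antipode}). The argument parallels the proof of Lemma~\ref{lem:antipode} for the left action, with the modification that right actions compose in the reverse order relative to multiplication in $H$, so one expects the placement of the $S$ and the surviving Sweedler factor to be interchanged.

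First, I apply Lemma~\ref{lem:rightactprod} with $f = h_1$ to distribute the right action $h_1 \leftharpoonup$ over the product $(S(h_2) \leftharpoonup a) \cdot b$ in $H^*$. Writing $\Delta h_1$ explicitly and using coassociativity to pass to the triple Sweedler $\Delta^2 h = \sum h_1 \otimes h_2 \otimes h_3$ yields an expression of the form
\[\sum (h_i \leftharpoonup (S(h_j) \leftharpoonup a)) \cdot (h_k \leftharpoonup b).\]

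Second, a short calculation directly from (\ref{eq:rightaction}) together with coassociativity of $H^*$ produces the composition rule $g \leftharpoonup (f \leftharpoonup c) = (fg) \leftharpoonup c$ for $f, g \in H$ and $c \in H^*$. Applying this collapses the nested right action inside the sum into a single action $(S(h_j) \cdot h_i) \leftharpoonup a$. The antipode identity~(\ref{eq:antipode}), combined with counitality, then reduces the relevant $H \otimes H$ piece of $\Delta^2 h$ to $1_H \otimes h$, turning the sum into $(1_H \leftharpoonup a) \cdot (h \leftharpoonup b)$.

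Finally, Lemma~\ref{lem:id} (giving $1_H \leftharpoonup a = a$) collapses this to $a \cdot (h \leftharpoonup b)$, the left-hand side. The main obstacle is choosing the right coassociative reorganization of $\Delta^2 h$ so that $S(h_j)$ and its partner $h_i$ become adjacent factors in a single coproduct expansion, which is required for the antipode identity to apply and produce $1_H$ directly; coassociativity provides the needed flexibility, but the correct pairing of Sweedler indices must be identified with care before the cancellation becomes visible.
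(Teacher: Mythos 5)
Your overall route is the same as the paper's: expand the right-hand side with Lemma~\ref{lem:rightactprod} and coassociativity, collapse the nested right action into a single action by a product in $H$, cancel via the antipode identity \eqref{eq:antipode}, and finish with Lemma~\ref{lem:id}. The difficulty is the step you yourself flag as ``the main obstacle,'' and it is a genuine gap rather than an index-bookkeeping detail. With the composition rule you correctly derive, $g\leftharpoonup(f\leftharpoonup c)=(fg)\leftharpoonup c$, the expansion comes out as
\[
\sum h_1\leftharpoonup\bigl((S(h_2)\leftharpoonup a)\cdot b\bigr)
=\sum\bigl(h_1\leftharpoonup(S(h_3)\leftharpoonup a)\bigr)\cdot(h_2\leftharpoonup b)
=\sum\bigl((S(h_3)\,h_1)\leftharpoonup a\bigr)\cdot(h_2\leftharpoonup b),
\]
because it is the \emph{first} Sweedler leg of $h$ that gets refined by Lemma~\ref{lem:rightactprod}; hence the antipode sits on the last leg $h_3$, its intended partner is $h_1$, and the leg acting on $b$ is the middle one. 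Identity \eqref{eq:antipode} cancels only adjacent legs in the order $S(h_1)h_2$ or $h_1S(h_2)$; it says nothing about $\sum S(h_3)h_1\otimes h_2$. Coassociativity cannot repair this: it lets you refine or flatten Sweedler legs, never permute them, so no ``coassociative reorganization'' will bring $S(h_j)$ and $h_i$ adjacent in the required order (permuting legs is exactly cocommutativity, which $\Qsym$ does not enjoy).

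For comparison, the paper's proof closes this step by writing the collapsed composite as $(h_1\cdot S(h_2))\leftharpoonup a$ with $b$ acted on by $h_3$ --- that is, it composes the nested actions in the opposite order to the rule you derived --- and it is precisely that ordering which makes \eqref{eq:antipode} applicable there. So your proposal reproduces the paper's strategy but stops exactly where the real work lies: you must either justify the order $(h_1\cdot S(h_2))\leftharpoonup a$ for the composite action under the stated conventions, or argue differently (for instance by a direct pairing computation against an arbitrary $g\in H$, in the style of the proofs of Lemmas~\ref{lem:rightactprod} and~\ref{lem:raction}), where the same ordering question must be confronted head on. As written, the passage from the triple-Sweedler expansion to $(1_H\leftharpoonup a)\cdot(h\leftharpoonup b)$ is not justified.
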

\begin{proof}
Expand the sum using Lemma~\ref{lem:rightactprod} and coassociativity, $(\Delta \otimes 1)\circ \Delta(h) = (1\otimes \Delta)\circ \Delta(h) = \sum h_1\otimes h_2\otimes h_3$, to get 
\begin{align*}
\sum h_1 \leftharpoonup ( (S(h_2)\leftharpoonup a)\cdot b)&=\sum (h_1\leftharpoonup (S(h_2)\leftharpoonup a))\cdot (h_3\leftharpoonup b)\\
&=\sum(h_1\cdot S(h_2)\leftharpoonup a) \cdot (h_3\leftharpoonup b) \text{ since $H^*$ is an $H$-module}\\
&=((\varepsilon(h)\cdot 1_H)\leftharpoonup a) \cdot  (h\leftharpoonup b) \text{ by \eqref{eq:antipode}}\\
&=a \cdot (h\leftharpoonup b) \text{ by Lemma~\ref{lem:id}}. \qedhere
\end{align*}
\end{proof}

\begin{lemma} \label{lem:raction}
Let $g,h \in H$ and $a \in H^*$.  Then 
\[ h\cdot (a\leftharpoonup g) = \sum (S(h_2) \leftharpoonup a) \leftharpoonup h_1\cdot g.\]
\end{lemma}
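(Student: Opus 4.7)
The plan is to parallel the proof of Lemma~\ref{lem:product}. I would proceed by a pairing argument, although a direct algebraic approach mirroring the four-line proof of Lemma~\ref{lem:product} would also work.

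Pair both sides of the claimed identity with an arbitrary $b \in H^*$. Using $\langle xy, c\rangle = \sum\langle x, c_1\rangle\langle y, c_2\rangle$, the identity $\langle a \leftharpoonup f, c\rangle = \langle f, ac\rangle$, and $\langle x, \alpha\beta\rangle = \sum\langle x_1,\alpha\rangle\langle x_2,\beta\rangle$, the left-hand side reduces in two steps to
\[
\langle h\cdot(a \leftharpoonup g),\, b\rangle \;=\; \sum \langle h, b_1\rangle\langle g_1, a\rangle\langle g_2, b_2\rangle.
\]
For the right-hand side, two applications of these same identities, combined with $\Delta(h_1 g) = \Delta(h_1)\Delta(g)$, produce a sum indexed by the 3-fold coproduct of $h$ in which $S$ and an $h$-factor appear side by side. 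A single invocation of the antipode axiom $\sum S(h_1)h_2 = \varepsilon(h)\cdot 1_H$, after coassociativity is used to realign that pair as adjacent indices in the 3-fold coproduct of $h$, telescopes the $h$-sum via the counit and recovers precisely the same expression obtained for the left-hand side.

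The algebraic alternative would first establish a second-form analog of Lemma~\ref{lem:rightactprod}, namely
\[
\alpha \leftharpoonup (fg) \;=\; \sum (\alpha_1 \leftharpoonup f)\cdot(\alpha_2\leftharpoonup g)
\]
for $\alpha\in H^*$ and $f,g\in H$, provable by a short pairing computation essentially identical to that of Lemma~\ref{lem:rightactprod}. One would then mimic the last four lines of the proof of Lemma~\ref{lem:product}: apply this new identity to the right-hand side, then invoke the module property to combine the nested actions, then the antipode identity to collapse the $h$-contribution, and finally the analog of Lemma~\ref{lem:id} asserting $1_{H^*}\leftharpoonup g = g$.

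The main obstacle I foresee in either route is the careful bookkeeping of Sweedler indices after $\Delta(h_1 g)$ has been expanded; coassociativity must be invoked so that the antipode axiom acts on an adjacent pair of indices in the 3-fold coproduct of $h$, rather than on the ``as-written'' non-adjacent indices produced by the naive expansion. Once that alignment is in place, the counit axiom finishes the computation automatically.
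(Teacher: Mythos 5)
Your computation of the left-hand side is correct, and the auxiliary identity $\alpha\leftharpoonup(fg)=\sum(\alpha_1\leftharpoonup f)\cdot(\alpha_2\leftharpoonup g)$ in your second route is true and proved exactly as you indicate. The gap is the step you yourself flag: the claim that coassociativity will realign the antipode onto an adjacent pair of Sweedler legs. It cannot. Expanding the right-hand side with $\langle a\leftharpoonup f,b\rangle=\langle f,ab\rangle$, then $\Delta(h_1g)=\Delta(h_1)\Delta(g)$, then $\langle g,f\leftharpoonup a\rangle=\langle fg,a\rangle$, gives
\[
\sum \bigl\langle (S(h_2)\leftharpoonup a)\leftharpoonup (h_1\cdot g),\,b\bigr\rangle
=\sum \bigl\langle S(h_3)\,h_1 g_1,\,a\bigr\rangle\,\bigl\langle h_2 g_2,\,b\bigr\rangle ,
\]
while your left-hand side equals $\sum\langle g_1,a\rangle\langle h g_2,b\rangle$. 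So you need $\sum S(h_3)h_1g_1\otimes h_2g_2=\sum g_1\otimes hg_2$, and already at $g=1_H$ this is $\sum S(h_3)h_1\otimes h_2=1_H\otimes h$. Coassociativity only says the threefold coproduct is unambiguous; it never permutes tensor legs, so the antipode axiom $\sum S(h_1)h_2=\varepsilon(h)1_H$ is not applicable, and the needed identity is genuinely not a consequence of the Hopf axioms: for $H=\mathcal{O}(G)$ with $G$ a nonabelian finite group and $h=\delta_x$ one computes $\sum S(h_3)h_1\otimes h_2=\sum_{u\in G}\delta_u\otimes\delta_{u^{-1}xu}$, which equals $1\otimes\delta_x$ only for central $x$. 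What is provable for an arbitrary dual pair is the transposed form $h\cdot(a\leftharpoonup g)=\sum(S(h_1)\leftharpoonup a)\leftharpoonup(h_2\cdot g)$; the placement with $S$ on $h_2$ is legitimate exactly when the relevant legs of $\Delta^{(2)}(h)$ can be permuted, which holds for the only $h$ to which the lemma is applied, namely $h=F_{(s)}$ in Theorem~\ref{thm:skewpieri}, so the downstream results are unaffected. Your second route runs into the same wall: after the module property you again face $\sum S(h_3)h_1\otimes h_2$ rather than an adjacent pair.

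The paper's proof differs in exactly the way that avoids this: it never invokes the antipode axiom in this lemma at all. It pairs with $b$, uses $\langle hx,b\rangle=\langle x,h\leftharpoonup b\rangle$ and $\langle a\leftharpoonup g,c\rangle=\langle g,ac\rangle$ to arrive at $\langle g,a\cdot(h\leftharpoonup b)\rangle$, substitutes Lemma~\ref{lem:product}, and unwinds with the same two adjunctions to reach $\sum\langle(S(h_2)\leftharpoonup a)\leftharpoonup h_1\cdot g,\,b\rangle$; given Lemma~\ref{lem:product}, this is purely formal, with all antipode bookkeeping (including the adjacency issue you worried about) quarantined in that earlier lemma. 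The repair for your write-up is therefore either to quote Lemma~\ref{lem:product} as the paper does, or, if you want a self-contained argument, to assume the needed symmetry of $\Delta^{(2)}(h)$ (true for $h=F_{(s)}$), or to prove the transposed form displayed above.
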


\begin{proof}
Let $g,h \in H$ and $a,b \in H^*$.  Then 
\begin{align*} 
\langle h\cdot (a\leftharpoonup g),b\rangle &= \langle a\leftharpoonup g, h\leftharpoonup b\rangle \\
&= \langle g,a\cdot (h\leftharpoonup b)\rangle\\
&=\left \langle g, \sum(h_1\leftharpoonup (S(h_2)\leftharpoonup a)\cdot b\right\rangle \text{ by Lemma~\ref{lem:product}}\\
&=\sum \langle g,h_1\leftharpoonup (S(h_2)\leftharpoonup a)\cdot b\rangle\\
&=\sum\langle h_1\cdot g,(S(h_2)\leftharpoonup a)\cdot b \rangle\\
&=\sum \langle (S(h_2)\leftharpoonup a)\leftharpoonup h_1\cdot g, b\rangle. \qedhere
\end{align*}
\end{proof}

We can use the right action to obtain an algebraic Littlewood-Richardson formula analogous to~\cite[\svw{Theorem 2.1}]{LLS2011} for those bases whose skew elements appear as the right tensor factor in the coproduct.  

Let $\{L_\alpha\} \subset H$ and $\{R_\beta\} \subset H^*$ be dual bases with indexing set $\mathcal{P}$.  Then 
\begin{align}
L_\alpha \cdot L_\beta = \sum_{\gamma}b^\gamma_{\alpha,\beta}L_\gamma & \qquad \Delta(L_\gamma) = \sum_{\alpha,\beta} c^\gamma_{\alpha,\beta} L_\alpha \otimes L_\beta \\
R_\alpha\cdot R_\beta = \sum_{\gamma}c^\gamma_{\alpha,\beta}R_\gamma & \qquad \Delta(R_\gamma) = \sum_{\alpha,\beta}b^\gamma_{\alpha, \beta} R_\alpha \otimes R_\beta
\end{align}
where $b^\gamma_{\alpha,\beta}$ and $c^\gamma_{\alpha,\beta}$ are structure constants.  We can also write 
\begin{equation}
\Delta(L_\gamma) = \sum_{\delta} L_\delta \otimes L_{\gamma/\delta} \qquad \Delta(R_\gamma) = \sum_{\delta} R_\delta \otimes R_{\gamma/\delta}.
\end{equation}

Note that $L_\alpha \leftharpoonup R_\beta = \svw{R_{\beta/\alpha}}$ and $R_\beta \leftharpoonup L_\alpha = L_{\alpha/\beta}$.  Further, 
\begin{equation}
\Delta(L_{\alpha/\beta}) = \sum_{\pi,\rho}c^\alpha_{\pi,\rho,\beta} L_\pi \otimes L_\rho \qquad \Delta(R_{\alpha/\beta}) = \sum_{\pi,\rho}b^\alpha_{\pi,\rho,\beta}R_\pi \otimes R_\rho.
\end{equation}  The antipode acts on $L_\rho$ by $S(L_\rho) = (-1)^{\theta(\rho)}L_{\rho^*}$ where $\theta:\mathcal{P}\rightarrow \mathbb{N}$ and $*:\mathcal{P}\rightarrow \mathcal{P}$.

\begin{theorem}\label{thm:genskewLR} For $\alpha, \beta,\gamma,\delta \in \mathcal{P}$, 
\[L_{\alpha/\beta}\cdot L_{\gamma/\delta} = \sum_{\pi,\rho,\nu,\mu}(-1)^{\theta(\rho)}c^\alpha_{\pi,\rho,\beta}b^\nu_{\pi,\gamma}b^\delta_{\mu,\rho^*} L_{\nu/\mu}.\]
\end{theorem}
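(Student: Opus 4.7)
The plan is to derive the formula by a direct application of the right-action Lemma~\ref{lem:raction}, in close analogy with the proof of \cite[Theorem 2.1]{LLS2011}. The key observation is that the second factor $L_{\gamma/\delta}$ in the product can be recast as a right action, namely $L_{\gamma/\delta} = R_\delta \leftharpoonup L_\gamma$, so that $L_{\alpha/\beta}\cdot L_{\gamma/\delta}$ matches the left-hand side of Lemma~\ref{lem:raction} with the choices $h = L_{\alpha/\beta}$, $a = R_\delta$, and $g = L_\gamma$.

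With this setup, the first step is to apply Lemma~\ref{lem:raction} to obtain
\[L_{\alpha/\beta}\cdot L_{\gamma/\delta} \;=\; \sum \bigl(S(h_2)\leftharpoonup R_\delta\bigr) \leftharpoonup (h_1\cdot L_\gamma),\]
where the Sweedler sum is governed by the coproduct expansion $\Delta L_{\alpha/\beta} = \sum_{\pi,\rho}c^\alpha_{\pi,\rho,\beta}\,L_\pi\otimes L_\rho$, identifying $h_1 = L_\pi$ and $h_2 = L_\rho$. Next I would simplify each piece of the summand: the antipode formula gives $S(L_\rho) = (-1)^{\theta(\rho)}L_{\rho^*}$; the right-action identity $L_{\rho^*}\leftharpoonup R_\delta = R_{\delta/\rho^*}$ converts the middle factor into a skew element of $H^*$; and expanding the honest product yields $L_\pi\cdot L_\gamma = \sum_\nu b^\nu_{\pi,\gamma}\,L_\nu$.

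The remaining step is to interpret $R_{\delta/\rho^*}\leftharpoonup L_\nu$ as a linear combination of skew basis elements $L_{\nu/\mu}$. For this I would expand the skew element $R_{\delta/\rho^*}$ in the dual basis via the $b$-structure constants coming from $\Delta R_\delta$, and then use $R_\mu\leftharpoonup L_\nu = L_{\nu/\mu}$. Collecting all contributions by bilinearity of $\leftharpoonup$ and regrouping the scalar coefficients produces the claimed four-parameter sum. The argument is not deep once Lemma~\ref{lem:raction} is in hand; the only delicate point is to keep careful track of Sweedler indices and the subscript order in each structure constant so that $c^\alpha_{\pi,\rho,\beta}$, $b^\nu_{\pi,\gamma}$, and $b^\delta_{\mu,\rho^*}$ each land in the intended slot of the final expression.
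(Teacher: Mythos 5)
Your proposal is correct and follows essentially the same route as the paper's proof: rewrite $L_{\gamma/\delta}=R_\delta\leftharpoonup L_\gamma$, apply Lemma~\ref{lem:raction} with the Sweedler components of $\Delta L_{\alpha/\beta}$ given by the $c^\alpha_{\pi,\rho,\beta}$, then use the antipode formula, $L_{\rho^*}\leftharpoonup R_\delta=R_{\delta/\rho^*}$, the product expansion of $L_\pi\cdot L_\gamma$, and finally $R_\mu\leftharpoonup L_\nu=L_{\nu/\mu}$. No substantive differences to report.
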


\begin{proof} We use Lemma~\ref{lem:raction} and the preceding facts about the product, coproduct, and antipode maps on $H$ and $H^*$ to obtain
\begin{align*}
L_{\alpha/\beta}\cdot L_{\gamma/\delta} & = L_{\alpha/\beta}\cdot (R_\delta \leftharpoonup L_\gamma)\\
&=\sum_{\pi,\rho} c^\alpha_{\pi,\rho,\beta}(S(L_\rho)\leftharpoonup R_\delta) \leftharpoonup (L_\pi\cdot L_\gamma)\\
&= \sum_{\pi,\rho}(-1)^{\theta(\rho)}c^\alpha_{\pi,\rho,\beta}(L_{\rho^*} \leftharpoonup R_\delta) \leftharpoonup(L_\pi\cdot L_\gamma)\\
&=\sum_{\pi,\rho}(-1)^{\theta(\rho)}c^\alpha_{\pi,\rho,\beta}\left(R_{\delta/\rho^*} \leftharpoonup \left(\sum_{\nu}b^\nu_{\pi,\gamma}L_\nu\right)\right)\\
&=\sum_{\pi,\rho,\nu}(-1)^{\theta(\rho)}c^\alpha_{\pi,\rho,\beta}b^\nu_{\pi,\gamma} (R_{\delta/\rho^*}\leftharpoonup L_\nu)\\
&=\sum_{\pi,\rho,\nu,\mu}(-1)^{\theta(\rho)}c^\alpha_{\pi,\rho,\beta}b^\nu_{\pi,\gamma} b^\delta_{\mu,\rho^*}(R_\mu\leftharpoonup L_\nu)\\
&=\sum_{\pi,\rho,\nu,\mu}(-1)^{\theta(\rho)}c^\alpha_{\pi,\rho,\beta}b^\nu_{\pi,\gamma} b^\delta_{\mu,\rho^*} L_{\nu/\mu}. \qedhere
\end{align*}
\end{proof}

\section{The dual Hopf algebras $\Qsym$ and $\Nsym$}\label{sec:QSymNSym}
We now focus our attention on the dual Hopf algebra pair of noncommutative symmetric functions and quasisymmetric functions, and introduce our main objects of study the (row-strict) dual immaculate functions.

A \emph{composition} $\alpha = (\alpha _1, \ldots, \alpha _k)$ of $n$, denoted by $\alpha \vDash n$ is a list of positive integers such that $\sum _ {i=1} ^{k} \alpha _i = n$. We call $n$ the \emph{size} of $\alpha$ and sometimes denote it by $|\alpha |$, and call $k$ the \emph{length} of $\alpha$ and sometimes denote it by $\ell (\alpha)$. If $\alpha _{j_1}= \cdots = \alpha _{j_m} = i$ we sometimes abbreviate this to $i^m$, and denote the \emph{empty composition} of 0 by $\emptyset$. 
There exists a natural correspondence between compositions $\alpha \vDash n$ and subsets $S\subseteq \{ 1, \ldots , n-1\} = [n-1]$. More precisely, $\alpha = (\alpha _1, \ldots , \alpha _k)$ corresponds to $\set (\alpha) = \{ \alpha _1, \alpha _1 + \alpha _2, \ldots , \alpha _1 + \cdots +\alpha _{k-1}\}$, and conversely $S= \{ s_1, \ldots , s_{k-1}\}$ corresponds to $\comp (S) = ( s_1, s_2 - s_1, \ldots , n- s_{k-1})$. We also denote by $S^c$ the set complement of $S$ in $[n-1]$. 

Given a composition $\alpha$, its \emph{diagram}, also denoted by $\alpha$, is the array of left-justified boxes with $\alpha _i$ boxes in row $i$ from the \emph{bottom}. Given two compositions $\alpha, \beta$ we say that $\beta \subseteq \alpha$ if $\beta _j \leq \alpha _j$ for all $1\leq j \leq \ell (\beta) \leq \ell (\alpha)$, and given $\alpha, \beta$ such that $\beta \subseteq \alpha$, the \emph {skew diagram}  $\alpha / \beta$ is the array of boxes in $\alpha$ but not $\beta$ when $\beta$ is placed in the bottom-left corner of $\alpha$. If, furthermore, $\beta \subseteq \alpha$ and $\alpha _j - \beta _j \in \{ 0,1\}$ for all $1\leq j \leq \ell (\beta)\leq \ell (\alpha) $ then we call $\alpha / \beta$ a \emph{vertical strip}.  

\begin{example}\label{ex:diags}
If $\alpha = (3,4,1)$, then $|\alpha|=8, \ell (\alpha) = 3$, and $\set (\alpha) = \{3,7\}$. Its diagram is 
$$\alpha = \tableau{ \ \\ \ & \ & \ & \ \\ \ & \ & \ }$$and if $\beta = (2,4)$, then $$\alpha / \beta = \tableau{ \ \\   &   &   &   \\  &  & \ }$$is a vertical strip.
\end{example}

\begin{definition}\label{def:SIT}
Given a composition $\alpha$, a \emph{standard immaculate tableau} $T$ of \emph{shape} $\alpha$ is a bijective filling of its diagram with $1, \ldots , |\alpha|$ such that
\begin{enumerate}
\item The entries in the leftmost column increase from bottom to top;
\item The entries in each row increase from left to right.
\end{enumerate}We obtain a \emph{standard skew immaculate tableau} of shape $\alpha / \beta$ by extending the definition to skew diagrams $\alpha / \beta$ in the natural way. 
\end{definition}

Given a standard (skew) immaculate tableau, $T$, its \emph{descent set} is
$$\Des (T) = \{ i : i+1 \mbox{ appears strictly above $i$ in $T$} \}.$$

\begin{example}\label{ex: skewSIT}
A standard skew immaculate tableau of shape $(3,4,1)/(1)$ is
$$T = \tableau{7\\ 2&3&4&6\\ & 1& 5}$$with $\Des (T) = \{ 1,5,6 \}$.
\end{example}

We are now ready to define our Hopf algebras and functions of central interest.

Given a composition $\alpha = ( \alpha _1, \ldots , \alpha _k) \vDash n$ and commuting variables $\{ x_1, x_2, \ldots \}$ we define the \emph{monomial quasisymmetric function} $M_\alpha$ to be
$$M_\alpha = \sum _{i_1 < \cdots < i_k} x_{i_1} ^{\alpha _1} \cdots x_{i_k} ^{\alpha _k}$$ the \emph{fundamental quasisymmetric function} $F_\alpha$ to be
$$F_\alpha = \sum _{i_1 \leq \cdots \leq i_n \atop i_j = i_{j+1} \Rightarrow j \not\in \set(\alpha)} x_{i_1} \cdots x_{i_n} $$the \emph{dual immaculate function} $\dI_\alpha$ to be
$$\dI _\alpha = \sum _{T} F_{\comp(\Des(T))}$$and the 
\emph{row-strict dual immaculate function} $\rdI _\alpha$ to be
$$\rdI _\alpha = \sum _{T} F_{\comp(\Des(T)^c)}$$where the latter two sums are over all standard immaculate tableaux $T$ of shape $\alpha$. These extend naturally to give \emph{skew} dual immaculate and row-strict dual immaculate functions $\dI _{\alpha / \beta}$ \cite{BBSSZ2014} $\rdI _{\alpha / \beta}$ \cite{NSvWVW2022}, where $\alpha / \beta$ is a skew diagram.

The set of all monomial or fundamental quasisymmetric functions forms a basis for the \emph{Hopf algebra of quasisymmetric functions}  $\QSym$, as do the set of all (row-strict) dual immaculate functions. There exists an involutory automorphism $\psi$ defined on fundamental quasisymmetric functions by
$$\psi (F_\alpha) = F_{\comp (\set (\alpha ^c))}$$such that \cite{NSvWVW2022}
$$\psi (\dI _\alpha) = \rdI _\alpha$$for a composition $\alpha$. This extends naturally to skew diagrams $\alpha / \beta$ to give
$$\psi (\dI _{\alpha/\beta}) = \rdI _{\alpha/\beta}.$$

Dual to the Hopf algebra of quasisymmetric functions is the \emph{Hopf algebra of noncommutative symmetric funtions} $\Nsym$. Given a composition $\alpha = (\alpha _1, \ldots, \alpha _k) \vDash n$ and \emph{noncommuting variables} $\{ y_1, y_2, \ldots \}$ we define the  \emph{$n$th elementary noncommutative symmetric function} $\nce _n$ to be
$$\nce _n = \sum _{i_1<\cdots < i_n} y_{i_1}\cdots y_{i_n}$$and the \emph{elementary noncommutative symmetric function} $\nce _\alpha$ to be
$$\nce _\alpha = \nce _{\alpha _1} \cdots \nce _{\alpha _k}.$$ Meanwhile, we define the \emph{$n$th complete homogeneous noncommutative symmetric function} $\nch _n$ to be
$$\nch _n = \sum _{i_1\leq \cdots \leq i_n}y_{i_1}\cdots y_{i_n}$$and the \emph{complete homogeneous noncommutative symmetric function} $\nch _\alpha$ to be
$$\nch_\alpha = \nch _{\alpha _1} \cdots \nch _{\alpha _k}.$$ The set of all elementary or complete homogeneous noncommutative symmetric functions forms a basis for $\Nsym$. The duality between $\Qsym$ and $\Nsym$ is given by
$$\langle M_\alpha , \nch _\alpha \rangle = \delta _{\alpha\beta}$$where $\delta _{\alpha\beta}=1$ if $\alpha = \beta$ and 0 otherwise. This induces the bases dual to the (row-strict) dual immaculate functions via
$$\langle \dI _\alpha , \nci _\alpha \rangle = \delta _{\alpha\beta} \qquad \langle \rdI_\alpha , \ncri _\alpha \rangle = \delta _{\alpha\beta}$$and  implicitly defines the bases of \emph{immaculate} and \emph{row-strict immaculate functions}. While concrete combinatorial definitions of these functions have been established \cite{BBSSZ2014, NSvWVW2022}, we will not need them here. However, what we will need is the involutory automorphism in $\Nsym$ corresponding to $\psi$  in $\Qsym$, defined by $\psi (\nce _\alpha) = \nch _\alpha$ that gives \cite{NSvWVW2022} $\psi (\nci _\alpha ) = \ncri _\alpha$.

\section{The Pieri rules for skew dual immaculate functions}\label{sec:Pieri}
A left Pieri rule for immaculate functions was conjectured in~\cite[Conjecture 3.7]{BBSSZ2014} and proved in~\cite{BSOZ2016}. Given a composition $\alpha = (\alpha_1,\ldots,\alpha_k)$ we say that $\tail(\alpha) = (\alpha_2,\ldots,\alpha_k)$. If $\beta\in \mathbb{Z}^k$, then $\negc(\alpha-\beta) = |\{i:\alpha_i-\beta_i<0\}|$. Let $\sgn(\beta)=(-1)^{\negc(\beta)}$ with $\negc(\beta)=|\{i:\beta_i<0\}|$.

Following~\cite{BSOZ2016}, we define $Z_{s,\alpha}$ to be a set of all $\beta\in \mathbb{Z}^k$ such that \begin{enumerate}
    \item $\beta_1+\cdots+\beta_k=s$ and $\beta_1+\cdots+\beta_i\leq s$ for all $i<k$; \\
    \item $\alpha_i-\beta_i\geq 0$ for all $1\leq i\leq k$ and $|i:\alpha_i-\beta_i=0|\leq 1$; \\
    \item For all $1\leq i\leq k$,
    \begin{itemize}
    \item if $\alpha_i>s-(\beta_1+\cdots+\beta_{i-1}),$ then $ 0\leq \beta_i\leq s-(\beta_1+\cdots+\beta_{i-1}),$
    \item if $\alpha_i<s-(\beta_1+\cdots+\beta_{i-1})$, then $\beta_i<0$, and
    \item if $\alpha_i=s-(\beta_1+\cdots+\beta_{i-1}),$ then either $\beta_i<0$ or $\beta_i=\alpha_i$ and $\beta_{i+1}=\cdots=\beta_k=0$.
    \end{itemize}
    \end{enumerate}
    
Now we are ready to define the coefficients of the immaculate basis appearing in the left Pieri rule.

\begin{definition}[\cite{BSOZ2016}]\label{piericoeff}
For a positive integer $s$ and compositions $\alpha, \gamma$ with $|\alpha|-|\gamma|=s$, let $1\leq j\leq k$ be the smallest integer such that $\alpha_i=\gamma_{i-1}$ for all $j<i\leq k$ where $j=k$ when $\alpha_k\neq \gamma_{k-1}$. Let $j\leq r\leq k$ be the largest integer such that $\alpha_j<\alpha_{j+1}<\cdots<\alpha_r$. Let $\alpha^{(i)} = (\alpha_1,\ldots,\alpha_i)$ Then define \[c^{\gamma}_{s,\alpha}=
\left\{\begin{array}{ll}\sgn(\alpha-\gamma),&\text{if } \ell(\gamma)=\ell(\alpha)  \text{ and }  \alpha-\gamma\in Z_{s,\alpha};\\
\sgn(\alpha^{(j-1)}-\gamma^{(j-1)}) & \text{if } \ell(\gamma)=\ell(\alpha)-1,\\ &r-j  \text{ is even, and } \\ &(\alpha^{(j-1)}-\gamma^{(j-1)},\alpha_j,0,\ldots,0)\in Z_{s,\alpha};
 \\
0&\text{otherwise.}\end{array}\right.\]
\end{definition}

\begin{theorem}[\cite{BBSSZ2014,BSOZ2016}]\label{thm:immleftpieri}
Let $m>0$ and $\alpha$ be a composition.  Then 
\[\nch_m\nci_\alpha = \sum_{\substack{\beta\vDash |\alpha|+m\\\beta_1\geq m\\0\leq \ell(\beta)-\ell(\alpha)\leq 1}} c^{\tail(\beta)}_{\beta_1-m, \alpha} \nci_\beta.\]
\end{theorem}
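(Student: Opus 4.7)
The statement is the left Pieri rule for the immaculate basis, conjectured as~\cite[Conjecture 3.7]{BBSSZ2014} and proved in~\cite{BSOZ2016}; my plan for a proof combines the $\Nsym$--$\Qsym$ duality with the noncommutative Jacobi--Trudi identity for $\nci_\alpha$.

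First I would interpret the Pieri coefficient via duality. The coefficient of $\nci_\beta$ in $\nch_m\nci_\alpha$ is $\langle \nch_m\nci_\alpha, \dI_\beta\rangle$, which by the dual Hopf pairing equals
\[
\langle \nch_m\otimes \nci_\alpha,\; \Delta\dI_\beta\rangle
= \sum_{\delta}\langle \nch_m,\dI_\delta\rangle\,\langle \nci_\alpha, \dI_{\beta/\delta}\rangle.
\]
Inspecting the fundamental expansion of $\dI_\delta$, one sees that $\langle \nch_m,\dI_\delta\rangle$ counts standard immaculate tableaux of shape $\delta$ with empty descent set; since any entry $i+1$ placed in a row above $i$ produces a descent and the leftmost column must strictly increase, all entries of such a tableau are forced into a single row, so the pairing equals $1$ when $\delta=(m)$ and $0$ otherwise. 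Thus the Pieri coefficient equals $\langle \nci_\alpha, \dI_{\beta/(m)}\rangle$, i.e., the coefficient of $\dI_\alpha$ in the skew dual immaculate function $\dI_{\beta/(m)}$ expanded in the dual immaculate basis.

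Next I would evaluate that coefficient using the noncommutative Jacobi--Trudi identity established in~\cite{BBSSZ2014},
\[
\nci_\alpha \;=\; \sum_{\sigma\in\mathfrak{S}_{\ell(\alpha)}}\sgn(\sigma)\,\nch_{\alpha_1+\sigma(1)-1}\,\nch_{\alpha_2+\sigma(2)-2}\cdots\nch_{\alpha_k+\sigma(k)-k},
\]
with the conventions $\nch_0=1$ and $\nch_n=0$ for $n<0$, together with its inverse relation expressing products of $\nch$'s in the $\nci$-basis. Multiplying on the left by $\nch_m$ and then re-collecting turns $\nch_m\nci_\alpha$ into a signed sum over integer tuples $\beta\in\mathbb{Z}^{\ell(\alpha)}$ or $\mathbb{Z}^{\ell(\alpha)+1}$ that record how $m$ is absorbed by the parts of $\alpha$; the constraints $\beta_1\ge m$ and $0\le\ell(\beta)-\ell(\alpha)\le 1$ emerge from the triangular structure of the expansion.

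The main obstacle is the combinatorial bookkeeping: one must demonstrate that after the expected massive cancellation only the tuples in $Z_{s,\alpha}$ of Definition~\ref{piericoeff} survive, carrying precisely the prescribed signs, and that the length-preserving versus length-increasing alternatives correspond exactly to the bifurcation in $c^{\tail(\beta)}_{\beta_1-m,\alpha}$ governed by the first index $j$ with $\alpha_j\ne \gamma_{j-1}$ and the maximal ascending run $\alpha_j<\cdots<\alpha_r$. This is carried out in~\cite{BSOZ2016} via an explicit sign-reversing involution on the offending tuples whose fixed points are exactly those in $Z_{s,\alpha}$ satisfying the hypotheses of the definition.
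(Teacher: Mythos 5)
The paper itself gives no proof of this statement: it is imported verbatim from \cite{BBSSZ2014,BSOZ2016} (conjectured in the former, proved in the latter), so there is no internal argument of the paper to measure yours against. Your opening reduction is correct and nicely argued: by duality the coefficient of $\nci_\beta$ in $\nch_m\nci_\alpha$ is $\langle \nch_m\nci_\alpha,\dI_\beta\rangle=\sum_\delta\langle\nch_m,\dI_\delta\rangle\langle\nci_\alpha,\dI_{\beta/\delta}\rangle$, and your observation that a standard immaculate tableau with empty descent set must be a single row (any second row forces a descent at the entry preceding its first entry) correctly gives $\langle\nch_m,\dI_\delta\rangle=\delta_{\delta,(m)}$, hence the coefficient equals $\langle\nci_\alpha,\dI_{\beta/(m)}\rangle$.

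As a proof, however, the proposal has a genuine gap: everything after that is a plan rather than an argument. You change strategy midstream (from evaluating $\langle\nci_\alpha,\dI_{\beta/(m)}\rangle$ to re-expanding $\nch_m\nci_\alpha$ through the noncommutative Jacobi--Trudi identity) without connecting the two, and the entire content of the theorem --- that after the signed recollection the surviving terms are exactly those governed by Definition~\ref{piericoeff} (the set $Z_{s,\alpha}$, the index $j$, the parity of $r-j$), with coefficient $c^{\tail(\beta)}_{\beta_1-m,\alpha}$ and with the constraints $\beta_1\geq m$ and $0\leq\ell(\beta)-\ell(\alpha)\leq 1$ --- is precisely the step you defer to \cite{BSOZ2016}, described only as ``an explicit sign-reversing involution'' whose existence and fixed-point analysis you assert rather than supply (and which may not even be an accurate summary of how that reference argues; note that what this paper actually quotes from \cite{BSOZ2016} is their Lemma 3.1, the adjointness identity $\langle\nci_\alpha,F_{(s)}\dI_\beta\rangle=\langle\nch_r\nci_\alpha,\dI_{(s+r,\beta)}\rangle$, used to pass between the left Pieri rule and Theorem~\ref{thm:dimmleftpieri}). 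Since the paper treats this theorem as a cited result, a citation is a legitimate way to handle it here too; but then your write-up should be presented as a citation, not a proof. If a self-contained proof is intended, the cancellation argument identifying the survivors with $Z_{s,\alpha}$ and the sign $\sgn(\alpha-\gamma)$ must actually be carried out --- that is where all the difficulty of the theorem lives.
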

Applying $\psi$ to both sides of the left Pieri rule in Theorem~\ref{thm:immleftpieri} immediately yields a left Pieri rule for row-strict immaculate functions.

\begin{corollary}\label{cor:rsimmleftpieri}
Let $m>0$ and $\alpha$ be a composition.  Then 
\[\nce_m \ncri_\alpha =  \sum_{\substack{\beta\vDash |\alpha|+m\\\beta_1\geq m\\0\leq \ell(\beta)-\ell(\alpha)\leq 1}}c^{\tail(\beta)}_{\beta_1-m, \alpha} \ncri_\beta.\]
\end{corollary}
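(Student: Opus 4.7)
The plan is to deduce the corollary directly from Theorem~\ref{thm:immleftpieri} by applying the involutory automorphism $\psi$ of $\Nsym$ to both sides, exactly as the text between the two statements suggests. The setup is as follows. The paper has recorded that $\psi$ is an algebra involution of $\Nsym$ defined on the elementary basis by $\psi(\nce_\alpha) = \nch_\alpha$, so in particular $\psi$ is an involution meaning $\psi(\nch_m) = \nce_m$ for every positive integer $m$. It has also recorded that this same involution satisfies $\psi(\nci_\alpha) = \ncri_\alpha$ for every composition $\alpha$. Thus $\psi$ interchanges the complete homogeneous and elementary bases, and interchanges the immaculate and row-strict immaculate bases.

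Given this, my proof would be only a few lines. Start from the identity in Theorem~\ref{thm:immleftpieri}:
\[
\nch_m \nci_\alpha \;=\; \sum_{\substack{\beta \vDash |\alpha|+m \\ \beta_1\geq m \\ 0\leq \ell(\beta)-\ell(\alpha)\leq 1}} c^{\tail(\beta)}_{\beta_1-m,\alpha}\, \nci_\beta,
\]
and apply $\psi$ to both sides. Since $\psi$ is an algebra homomorphism, the left side becomes $\psi(\nch_m)\,\psi(\nci_\alpha) = \nce_m\, \ncri_\alpha$. Since $\psi$ is $k$-linear and $\psi(\nci_\beta)=\ncri_\beta$, the right side becomes
\[
\sum_{\substack{\beta \vDash |\alpha|+m \\ \beta_1\geq m \\ 0\leq \ell(\beta)-\ell(\alpha)\leq 1}} c^{\tail(\beta)}_{\beta_1-m,\alpha}\, \ncri_\beta,
\]
which is exactly the claimed formula. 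Note that the coefficients $c^{\tail(\beta)}_{\beta_1-m,\alpha}$, being scalars in $k$ (in fact integers given by Definition~\ref{piericoeff}), are untouched by $\psi$, so the indexing set of the sum and the coefficients match on the nose.

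There is essentially no obstacle here; the only thing to verify is what the preceding section already guarantees, namely that $\psi$ is multiplicative and sends $\nch_m \mapsto \nce_m$ and $\nci_\alpha \mapsto \ncri_\alpha$. If one wanted to be fully self-contained one could remark, as the paper does in Section~\ref{sec:QSymNSym}, that the equality $\psi(\nci_\alpha)=\ncri_\alpha$ is the $\Nsym$-side statement dual to $\psi(\dI_\alpha)=\rdI_\alpha$ in $\Qsym$, established in \cite{NSvWVW2022}. Once those ingredients are in hand, the corollary is immediate.
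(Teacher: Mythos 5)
Your proposal is correct and is exactly the paper's argument: the paper proves the corollary by applying the involution $\psi$ (which sends $\nch_m \mapsto \nce_m$ and $\nci_\beta \mapsto \ncri_\beta$, multiplicatively) to both sides of Theorem~\ref{thm:immleftpieri}, just as you do. Your write-up merely spells out the details the paper leaves implicit.
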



Lemma 3.1 of~\cite{BSOZ2016} shows that for $s\geq 0$, $r>0$ and compositions $\alpha,\beta$ with $|\alpha|=|\beta|+s$, 
\[\langle \nci_\alpha,\svw{F_{(s)}}\dI_\beta\rangle=\langle \nch_r\nci_\alpha,\dI_{(s+r,\beta)}\rangle.\]  This leads to the following Pieri rule for dual immaculate functions. 

\begin{theorem}[\cite{BSOZ2016}]\label{thm:dimmleftpieri}
Let $s>0$ and $\alpha$ be a composition.  Then 
\[\svw{F_{(s)}}\dI_\alpha = \sum_{\substack{\beta\vDash |\alpha|+s\\0\leq \ell(\beta)-\ell(\alpha)\leq 1}} c^{\alpha}_{s,\beta} \dI_\beta.\]
\end{theorem}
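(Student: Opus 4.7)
The plan is to deduce this Pieri rule from the already-stated Pieri rule for noncommutative immaculate functions (Theorem~\ref{thm:immleftpieri}) by using the duality between $\Qsym$ and $\Nsym$, together with the stated identity from Lemma 3.1 of~\cite{BSOZ2016}. Since $\{\dI_\beta\}$ and $\{\nci_\beta\}$ are dual bases, the coefficient of $\dI_\beta$ in $F_{(s)}\dI_\alpha$ is extracted as
\[
F_{(s)}\dI_\alpha \;=\; \sum_{\beta}\,\langle \nci_\beta,\,F_{(s)}\dI_\alpha\rangle\,\dI_\beta,
\]
so the entire problem reduces to evaluating the pairing $\langle \nci_\beta,\,F_{(s)}\dI_\alpha\rangle$ for $|\beta|=|\alpha|+s$.

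Next I would apply the cited identity with the roles of the two compositions reversed (so that the \emph{larger} composition plays the role of ``their $\alpha$''). This gives, for any choice of integer $r>0$,
\[
\langle \nci_\beta,\,F_{(s)}\dI_\alpha\rangle \;=\; \langle \nch_r\,\nci_\beta,\,\dI_{(s+r,\alpha)}\rangle.
\]
The idea is that this trick moves the analysis from the quasisymmetric side, where we do not yet know a Pieri rule for $\dI_\alpha$, to the noncommutative side, where Theorem~\ref{thm:immleftpieri} tells us exactly how to expand $\nch_r\,\nci_\beta$ in the $\{\nci_\gamma\}$ basis. Expanding and then using duality a second time (paired against $\dI_{(s+r,\alpha)}$) kills all terms except those with $\gamma=(s+r,\alpha)$, and reads off the scalar $c^{\tail(\gamma)}_{\gamma_1-r,\beta}=c^{\alpha}_{s,\beta}$.

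Finally I would verify that the support condition of the resulting sum agrees with that stated in the theorem. The constraint $\gamma_1\geq r$ from Theorem~\ref{thm:immleftpieri} becomes $s+r\geq r$, which holds by $s>0$, so it imposes nothing on $\beta$. The length condition $0\leq \ell(\gamma)-\ell(\beta)\leq 1$ becomes $0\leq \ell(\alpha)+1-\ell(\beta)\leq 1$, i.e. $0\leq \ell(\beta)-\ell(\alpha)\leq 1$, which is precisely the support appearing in the statement. Assembling the pieces then yields the claimed identity.

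I do not expect any single step here to be the real obstacle, because all three ingredients (duality, the BSOZ2016 bridging lemma, and the left Pieri rule for $\nci$) are either given in the excerpt or quoted directly. The only point that requires a little care is bookkeeping the direction of the inequality $|\alpha|=|\beta|+s$ in the bridging lemma relative to our setup, and checking that the $r>0$ in the lemma can be chosen freely (the right-hand side must then be independent of $r$, which is an automatic consistency check on the Pieri coefficients $c^{\tail(\gamma)}_{\gamma_1-r,\beta}$ when evaluated at $\gamma=(s+r,\alpha)$).
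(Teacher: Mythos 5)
Your proposal is correct and follows essentially the same route the paper indicates: extract the coefficient $\langle \nci_\beta, F_{(s)}\dI_\alpha\rangle$ by duality, convert it via Lemma 3.1 of \cite{BSOZ2016} to $\langle \nch_r\nci_\beta,\dI_{(s+r,\alpha)}\rangle$, and evaluate with the left Pieri rule of Theorem~\ref{thm:immleftpieri}, the unique surviving term $\gamma=(s+r,\alpha)$ giving $c^{\alpha}_{s,\beta}$ together with the correct length constraint. Your bookkeeping of the roles of $\alpha$ and $\beta$ and of the support conditions is accurate, so nothing is missing.
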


Again, applying $\psi$ to both sides gives a Pieri rule for row-strict dual immaculate functions.

\begin{corollary}\label{cor:rsdimmleftpieri}
Let $s>0$ and $\alpha$ be a composition.  Then 
\[F_{(1^s)} \rdI_\alpha = \sum_{\substack{\beta\vDash |\alpha|+s\\0\leq \ell(\beta)-\ell(\alpha)\leq 1}} c^{\alpha}_{s,\beta} \rdI_\beta.\]
\end{corollary}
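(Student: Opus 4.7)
The plan is to mirror the deduction that Corollary~\ref{cor:rsimmleftpieri} obtained from Theorem~\ref{thm:immleftpieri} in $\Nsym$, but now carried out on the $\Qsym$ side using Theorem~\ref{thm:dimmleftpieri}. That is, I would apply the involutory algebra automorphism $\psi$ of $\Qsym$ to both sides of the identity
\[F_{(s)}\,\dI_\alpha \;=\; \sum_{\substack{\beta\vDash |\alpha|+s\\ 0\leq \ell(\beta)-\ell(\alpha)\leq 1}} c^{\alpha}_{s,\beta}\,\dI_\beta\]
and translate each term via the known evaluations of $\psi$ on the fundamental and dual immaculate bases.

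For the right-hand side, the key point is that $\psi$ is a $k$-algebra automorphism, so it commutes with finite $k$-linear combinations and the coefficients $c^\alpha_{s,\beta}$ (which lie in $k$) are fixed. Together with the identity $\psi(\dI_\beta)=\rdI_\beta$ recalled in Section~\ref{sec:QSymNSym}, this gives
\[\psi\!\left(\sum_{\beta} c^{\alpha}_{s,\beta}\,\dI_\beta\right) \;=\; \sum_{\beta} c^{\alpha}_{s,\beta}\,\rdI_\beta,\]
with the same indexing set. For the left-hand side, multiplicativity of $\psi$ gives $\psi(F_{(s)}\dI_\alpha)=\psi(F_{(s)})\cdot \psi(\dI_\alpha)=\psi(F_{(s)})\cdot\rdI_\alpha$, so the proof reduces to the verification $\psi(F_{(s)})=F_{(1^s)}$.

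This last verification is a direct computation from the definition $\psi(F_\alpha)=F_{\comp(\set(\alpha)^c)}$. For $\alpha=(s)\vDash s$ we have $\set((s))=\emptyset\subseteq[s-1]$, so $\set((s))^c=[s-1]=\{1,2,\ldots,s-1\}$, and $\comp(\{1,\ldots,s-1\})=(1,1,\ldots,1)=(1^s)$. Hence $\psi(F_{(s)})=F_{(1^s)}$, and combining with the previous paragraph yields exactly
\[F_{(1^s)}\,\rdI_\alpha \;=\; \sum_{\substack{\beta\vDash |\alpha|+s\\ 0\leq \ell(\beta)-\ell(\alpha)\leq 1}} c^{\alpha}_{s,\beta}\,\rdI_\beta.\]

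There is no real obstacle here: the argument is entirely a transport of structure along $\psi$, and the only calculation is the one-line evaluation $\psi(F_{(s)})=F_{(1^s)}$. The mild subtlety worth being careful about is that one must apply $\psi$ (rather than invoke duality), so that the coefficients remain scalars and the sum is preserved termwise; the fact that $\psi$ is an algebra automorphism, not merely a linear map, is what lets the product $F_{(s)}\dI_\alpha$ transform into the product $F_{(1^s)}\rdI_\alpha$.
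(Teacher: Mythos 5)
Your proposal is correct and is exactly the paper's argument: the paper deduces the corollary by applying the involution $\psi$ to both sides of Theorem~\ref{thm:dimmleftpieri}, using $\psi(\dI_\beta)=\rdI_\beta$ and $\psi(F_{(s)})=F_{(1^s)}$. Your additional verification that $\psi(F_{(s)})=F_{(1^s)}$ and that the scalar coefficients are preserved is just a fuller write-up of the same one-line deduction.
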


We use these results together with Hopf algebra computations to construct a Pieri rule for skew dual immaculate functions.  Using the  map $\psi$, this also gives a Pieri rule for row-strict skew dual immaculate functions. But first we have a small, yet crucial, lemma.

\begin{lemma}\label{lem:immract}
 Let $\alpha$ and $\gamma$ be compositions.  Then $\nci_\gamma \leftharpoonup \dI_\alpha = \dI_{\alpha/\gamma}$.  
 \end{lemma}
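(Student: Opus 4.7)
The plan is to derive the identity as a direct specialization of the general observation noted just before Theorem~\ref{thm:genskewLR}, namely that $L_\alpha \leftharpoonup R_\beta = R_{\beta/\alpha}$ whenever $\{L_\alpha\}$ and $\{R_\beta\}$ are dual bases for which the skew element appears as the right tensor factor in the coproduct. Setting $H = \Nsym$, $H^* = \Qsym$, $L_\alpha = \nci_\alpha$, and $R_\beta = \dI_\beta$, the lemma reduces to verifying this hypothesis for the dual immaculate basis.

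First, I would recall from \cite{BBSSZ2014} that the skew dual immaculate function $\dI_{\alpha/\delta}$ is defined so that the coproduct on $\Qsym$ satisfies
$$\Delta(\dI_\alpha) = \sum_{\delta} \dI_\delta \otimes \dI_{\alpha/\delta}.$$
Then I would apply the definition of the right action in (\ref{eq:rightaction}) directly:
$$\nci_\gamma \leftharpoonup \dI_\alpha = \sum_\delta \langle \nci_\gamma, \dI_\delta \rangle\, \dI_{\alpha/\delta}.$$
The duality $\langle \dI_\delta, \nci_\gamma \rangle = \delta_{\gamma,\delta}$ from Section~\ref{sec:QSymNSym} collapses the sum to the single term $\dI_{\alpha/\gamma}$, finishing the proof.

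No substantive obstacle is expected; the argument is essentially a line of Sweedler-notation bookkeeping. The reason the lemma deserves a separate statement is strategic rather than technical: it is precisely the input needed to invoke Theorem~\ref{thm:genskewLR} with $L = \nci$, $R = \dI$, so that the abstract right-action skew Littlewood-Richardson formula can be translated into an explicit identity involving the skew dual immaculate functions $\dI_{\alpha/\beta}$ in Theorem~\ref{thm:skewpieri}. Applying the automorphism $\psi$ from Section~\ref{sec:QSymNSym} will then yield the corresponding row-strict statement $\ncri_\gamma \leftharpoonup \rdI_\alpha = \rdI_{\alpha/\gamma}$ for free.
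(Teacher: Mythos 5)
Your proposal is correct and takes essentially the same route as the paper: both apply the definition of the right action to the coproduct $\Delta(\dI_\alpha)=\sum_\delta \dI_\delta\otimes\dI_{\alpha/\delta}$ and use the duality $\langle \dI_\delta,\nci_\gamma\rangle=\delta_{\gamma\delta}$ to collapse the sum to $\dI_{\alpha/\gamma}$. The only cosmetic point is that under the paper's convention $H=\QSym$, $H^*=\NSym$ the relevant formula is the action \eqref{eq:rightaction2} (with $\nci_\gamma\in H^*$ acting on $\dI_\alpha\in H$) rather than \eqref{eq:rightaction}, but the computation is identical.
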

 \begin{proof}
Recall that if $H= \QSym$ and $H^*=\NSym$ are our pair of dual Hopf algebras, then we know $\Delta \dI_\alpha =\sum_\beta \dI_\beta \otimes \dI_{\alpha/\beta}$ and we have
 \[\nci_\gamma \leftharpoonup \dI_\alpha = \sum_\beta \langle \nci_\gamma , \dI_\beta \rangle \dI_{\alpha/\beta} = \dI_{\alpha/\gamma}\]since $\langle\nci_\gamma , \dI_\beta \rangle = \delta_{\gamma\beta}$, where $\delta_{\gamma\beta} = 1$ if $\gamma=\beta$ and 0 otherwise.
\end{proof}

We can now give our Pieri rule for (row-strict) skew dual immaculate functions.

\begin{theorem}\label{thm:skewpieri} Let $\gamma\subseteq\alpha$.  Then
\[\dI_{(s)}\dI_{\alpha/\gamma}=\sum_{\beta/\tau} (-1)^{|\gamma|-|\tau|}\cdot c^{\alpha}_{|\beta|-|\alpha|,\beta}\,\dI_{\beta/\tau}\]and hence by applying $\psi$ to both sides
\svw{\[\rdI_{(s)}\rdI_{\alpha/\gamma}=\sum_{\beta/\tau} (-1)^{|\gamma|-|\tau|}\cdot c^{\alpha}_{|\beta|-|\alpha|,\beta}\,\rdI_{\beta/\tau}\]}where $|\beta/\tau|=|\alpha/\gamma|+s$, $\gamma/\tau$ is a vertical strip of length at most $s$, $\ell(\beta)-\ell(\alpha) \in \{0,1\}$ and $c^{\alpha}_{|\beta|-|\alpha|,\beta}$ is the coefficient of Definition \ref{piericoeff}. These decompositions are multiplicity-free up to sign.
\end{theorem}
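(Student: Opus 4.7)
The plan is to apply the right-action skew Littlewood--Richardson formula of Theorem~\ref{thm:genskewLR} to the dual pair $\{L_\alpha=\dI_\alpha\}\subset\QSym$ and $\{R_\alpha=\nci_\alpha\}\subset\NSym$, with inputs $A=(s)$, $B=\emptyset$, $C=\alpha$, $D=\gamma$, noting that $\dI_{(s)/\emptyset}=\dI_{(s)}$. This yields
\[\dI_{(s)}\cdot\dI_{\alpha/\gamma}=\sum_{\pi,\rho,\nu,\mu}(-1)^{\theta(\rho)}\, c^{(s)}_{\pi,\rho,\emptyset}\, b^\nu_{\pi,\alpha}\, b^\gamma_{\mu,\rho^*}\, \dI_{\nu/\mu},\]
where the $b$'s are $\dI$-product structure constants and the $c$'s are iterated $\dI$-coproduct constants.

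I would then evaluate each Hopf-algebraic ingredient explicitly. Since $\dI_{(s)}=F_{(s)}=h_s$ lies in the symmetric subalgebra of $\QSym$, the identity $\Delta h_s=\sum_{i+j=s}h_i\otimes h_j$ forces $c^{(s)}_{\pi,\rho,\emptyset}=1$ precisely when $\pi=(a)$ and $\rho=(b)$ are single rows with $a+b=s$, and $0$ otherwise. The antipode identity $S(h_b)=(-1)^b e_b$, together with $\dI_{(1^b)}=e_b$, gives $S(\dI_{(b)})=(-1)^b\dI_{(1^b)}$; hence $\theta((b))=b$ and $(b)^*=(1^b)$ for the only $\rho$'s that survive. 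The factor $b^\nu_{(a),\alpha}$ is the Pieri coefficient $c^\alpha_{a,\nu}$ supplied by Theorem~\ref{thm:dimmleftpieri}.

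The crux is the remaining factor $b^\gamma_{\mu,(1^b)}$, the coefficient of $\dI_\gamma$ in $\dI_\mu\cdot e_b$. The key technical lemma I would need to establish is the right Pieri rule
\[\dI_\mu\cdot e_b\ =\sum_{\gamma:\ \gamma/\mu\ \text{is a vertical strip of size }b}\dI_\gamma,\]
so that $b^\gamma_{\mu,(1^b)}\in\{0,1\}$ with value $1$ exactly on such $\gamma$. I would prove this by a direct combinatorial argument: expand both sides in the fundamental quasisymmetric basis using the SIT definition of $\dI$, and exhibit a descent-preserving bijection between pairs consisting of an SIT of shape $\mu$ together with a shuffle inserting a strictly increasing column of length $b$, and SITs of skew shapes $\gamma/\mu$ with $\gamma/\mu$ a vertical strip. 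This right Pieri rule is the main obstacle and the only new combinatorial input beyond the Hopf-algebraic machinery.

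Finally, I would reindex the master formula by setting $\beta:=\nu$, $\tau:=\mu$, $a=|\beta|-|\alpha|$, and $b=|\gamma|-|\tau|=s-a$. The degree condition $|\beta/\tau|=|\alpha/\gamma|+s$ is exactly $a+b=s$; the length condition $\ell(\beta)-\ell(\alpha)\in\{0,1\}$ is automatic from the support of $c^\alpha_{|\beta|-|\alpha|,\beta}$; and the sign $(-1)^b$ becomes $(-1)^{|\gamma|-|\tau|}$. Each surviving pair $(\beta,\tau)$ contributes a single term, giving the claimed expansion, which is multiplicity-free up to sign. The row-strict version is obtained by applying the involutory automorphism $\psi$ term-by-term, using $\psi(\dI_{\alpha/\gamma})=\rdI_{\alpha/\gamma}$ and that $\psi$ is an algebra homomorphism.
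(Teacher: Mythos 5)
Your proposal follows essentially the same route as the paper: the paper does not quote Theorem~\ref{thm:genskewLR} verbatim but runs the identical computation directly from Lemma~\ref{lem:raction}, writing $\dI_{\alpha/\gamma}=\nci_\gamma\leftharpoonup\dI_\alpha$ (Lemma~\ref{lem:immract}), expanding $\Delta F_{(s)}=\sum_{i=0}^{s}F_{(i)}\otimes F_{(s-i)}$, using $S(F_{(s-i)})=(-1)^{s-i}F_{(1^{s-i})}$, and feeding in Theorem~\ref{thm:dimmleftpieri} for the factor you call $b^{\nu}_{(a),\alpha}=c^{\alpha}_{a,\nu}$; your reindexing, signs, and the application of $\psi$ for the row-strict statement also match the paper. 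Two remarks. First, a caveat about invoking Theorem~\ref{thm:genskewLR} as a black box: its standing hypothesis $S(L_\rho)=(-1)^{\theta(\rho)}L_{\rho^*}$ fails for the dual immaculate basis in general; you rightly verify it for the only $\rho$'s that survive, namely $\rho=(b)$ arising from $\Delta(\dI_{(s)})$, and that is all the proof of Theorem~\ref{thm:genskewLR} actually uses, but you should say explicitly that you are re-running that argument rather than applying the theorem's statement as given --- this is precisely why the paper works from Lemma~\ref{lem:raction}. Second, the step you single out as the main obstacle, the vertical-strip rule $\dI_\mu\cdot F_{(1^{b})}=\sum_{\gamma}\dI_\gamma$ over $\gamma$ with $\gamma/\mu$ a vertical strip of size $b$, requires no new combinatorics: since $\QSym$ is commutative, the coefficient $b^{\gamma}_{\mu,(1^{b})}$ is $\langle\nci_\gamma,F_{(1^{b})}\dI_\mu\rangle=\langle F^{\perp}_{(1^{b})}\nci_\gamma,\dI_\mu\rangle$, and the paper reads this off from Proposition 3.34 of \cite{BBSSZ2014}, which is exactly the dual statement. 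If you did insist on a bijective proof, note that your sketch misstates the target: the right-hand side is $\sum_\gamma\dI_\gamma$, so the bijection must land on standard immaculate tableaux of the straight shapes $\gamma$, not of the skew shapes $\gamma/\mu$. With these two points repaired or sourced, your argument is the paper's proof.
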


\begin{proof}
Note that $\dI_{(1^s)}=F_{(1^s)}$  and $\dI_{(s)}=F_{(s)}$. Recall that 
\begin{equation}\label{eq:fundcoprod}
\Delta F_\alpha = \sum_{\substack{(\beta,\gamma) \text{ with }\\\beta\cdot \gamma = \alpha \text{ or}\\\beta\odot \gamma=\alpha}} F_\beta\otimes F_\gamma
\end{equation}
where for $\beta = (\beta_1,\ldots, \beta_k)$ and $\gamma = (\gamma_1,\ldots, \gamma_l)$, $\beta\cdot\gamma = (\beta_1,\ldots, \beta_k,\gamma_1,\ldots, \gamma_l)$ is the \emph{concatenation} of $\beta$ and $\gamma$,  and $\beta\odot\gamma = (\beta_1, \ldots, \beta_{k-1},\beta_k+\gamma_1,\gamma_2,\ldots, \gamma_l)$ is the \emph{near-concatenation} of $\beta$ and $\gamma$. 

Then we have
\[\Delta(F_{(s)})=\sum_{i=0}^sF_{(i)}\otimes \svw{F_{(s-i)}.}\]
Thus,
\begin{align*}
\dI_{(s)}\dI_{\alpha/\gamma}   
&=\dI_{(s)}(\I_\gamma\leftharpoonup\dI_\alpha)\,\,\,\,\,\,\,\,\,\,\,\,\,\,\,\,\, \text{by Lemma~\ref{lem:immract}}\\
&=F_{(s)}(\I_\gamma\leftharpoonup\dI_\alpha) \\
&=\sum_{i=0}^s(S(\svw{F_{(s-i)}})\leftharpoonup \I_\gamma)\leftharpoonup(\svw{F_{(i)}}\dI_\alpha)\,\,\,\,\,\,\, \text{by Lemma~\ref{lem:raction}.} \\
\end{align*}

We first compute  $\sw{S(F_{(s-i)})}\leftharpoonup \I_\gamma$. Since it is well known that $S(F_\alpha)=(-1)^{|\alpha|}F_{\comp(\set(\alpha)^c)}$  we have $\sw{S(F_{(s-i)})=(-1)^{s-i}F_{(1^{s-i})}}$. Furthermore, we can write the coproduct as \[\Delta(\I_\gamma)=\sum_{\delta,\tau}b^\gamma_{\delta,\tau}\I_\delta\otimes\I_\tau.\] Thus,
\sw{\begin{align*}
S(F_{(s-i)})\leftharpoonup \I_\gamma
&=(-1)^{s-i}F_{(1^{s-i})}\leftharpoonup \I_\gamma \\
&=\sum_{\delta,\tau}(-1)^{s-i}b^\gamma_{\delta,\tau}\langle F_{(1^{s-i})}, \I_\delta\rangle\I_\tau \\
&=\sum_{\delta,\tau}(-1)^{s-i}b^\gamma_{\delta,\tau}\langle\I^*_{(1^{s-i})}, \I_\delta\rangle\I_\tau \\
&=\sum_{\tau}(-1)^{s-i}b^\gamma_{(1^{s-i}), \tau}\I_\tau. 
\end{align*}}
By the definition of product and coproduct on $\Nsym$, we have \[b^\gamma_{\delta,\tau}=\langle \Delta\I_\gamma,\dI_\delta\otimes\dI_\tau\rangle=\langle\I_\gamma,\dI_\delta\cdot\dI_\tau\rangle\svw{.}\]
To compute this for $\sw{\delta = (1^{s-i})}$ we use Proposition 3.34 from~\cite{BBSSZ2014} which states that $F^\perp_{(1^r)}\nci_\alpha = \sum_\beta \nci_\beta$ where $\beta \in \mathbb{Z}^{\ell(\alpha)}$, \svw{$\alpha_k - \beta_k \in \{0,1\}$} for all $k$ and $|\beta| = |\alpha|-r$.  The operator $F^\perp$ is used throughout~\cite{BBSSZ2014}, and has the property that $\langle F^\perp \nci_\alpha,\dI_\beta \rangle = \langle \nci_\alpha,F \dI_\beta\rangle$.  

Thus, 
\sw{\begin{align*}
b^\gamma_{(1^{s-i}), \tau}
&=\langle \nci_\gamma, \dI_{(1^{s-i})}\dI_\tau\rangle\\
&=\langle\nci_\gamma,F_{(1^{s-i})}\dI_\tau\rangle\\
&=\langle F^{\perp}_{(1^{s-i})}\nci_\gamma, \dI_\tau\rangle\\
&=\left\langle \sum_{\beta}\nci_\beta, \dI_\tau\right\rangle\\
&=\delta _{\beta \tau}
\end{align*}}where the sum is over all $\beta$ such that $\beta \in \mathbb{Z}^{\ell(\gamma)}$, $\gamma_k - \beta_k \in \{0,1\}$ for all $k$, and $|\beta|=|\gamma|-\sw{(s-i)}$.  

Then using the above calculations, Theorem~\ref{thm:dimmleftpieri} and Lemma~\ref{lem:immract}, we have
\begin{align*}
\dI_{(s)}\dI_{\alpha/\gamma}   
&=\dI_{(s)}(\I_\gamma\leftharpoonup\dI_\alpha)\\
&=\sum_{i=0}^s\left((S(F_{(s-i)})\leftharpoonup \I_\gamma)\leftharpoonup(F_{(i)}\dI_\alpha)\right)\\
&=\sum_{i=0}^s \left((-1)^{(s-i)}\sum_{\substack{\tau \in \mathbb{Z}^{\ell(\gamma)}\\\gamma_k-\tau_k \in \{0,1\}\\|\tau|=|\gamma|-(s-i)}} \nci_\tau\right)\leftharpoonup \left(\sum_{\substack{\beta\vDash |\alpha|+i\\0\leq\ell(\beta) -\ell(\alpha)\leq 1}} c^{\alpha}_{i,\beta} \dI_{\beta}\right)\\
&=\sum_{i=0}^s \sum_{\substack{\tau,\beta\\
\tau \in \mathbb{Z}^{\ell(\gamma)}\\\gamma_k-\tau_k\in\{0,1\}\\|\tau|=|\gamma|-(s-i)\\\beta\vDash|\alpha|+i\\ \ell(\beta) - \ell(\alpha) \in \{0,1\}}} (-1)^{(s-i)}\cdot c^{\alpha}_{i,\beta}\, \dI_{\beta/\tau}\\
&=\sum_{\beta/\tau} (-1)^{|\gamma|-|\tau|}\cdot c^{\alpha}_{|\beta|-|\alpha|,\beta}\,\dI_{\beta/\tau}
\end{align*}
where $|\beta/\tau|=|\alpha/\gamma|+s$, $\gamma/\tau$ is a vertical strip of length at most $s$, and $\ell(\beta)-\ell(\alpha) \in \{0,1\}$.
\end{proof}

\begin{example}
Let us compute $\dI_{(2)}\cdot\dI_{(1,2,1)/(1,1)}$.

First, we need to compute all compositions $\beta\vDash 4+i$ for $i\in \{0,1,2\}$ and $\ell(\beta)=3$ or $4$. We list all possible choices for $\beta$ as the set 
\begin{align*}
A=\{&(1,1,1,1),(1,1,2),(1,2,1),(2,1,1),(1,1,1,2),(1,1,2,1),(1,2,1,1),\\
&(2,1,1,1),(1,1,3),(1,2,2),(1,3,1),(2,1,2),(2,2,1),(3,1,1),(1,1,1,3),\\
&(1,1,2,2),(1,1,3,1),(1,2,1,2),(1,2,2,1),(1,3,1,1),(2,1,1,2),\\
&(2,1,2,1),(2,2,1,1),(3,1,1,1),(1,1,4),(1,2,3),(1,3,2),(1,4,1),\\
&(2,1,3),(2,2,2),(2,3,1),(3,1,2),(3,2,1),(4,1,1)\}.
\end{align*}
 
Next we need to find $\tau$ by removing a vertical strip of length at most \svw{$s=2$} from $\gamma=(1,1)$. We list all options for $\tau$ as the set $B=\{\emptyset,(1),(1,1)\}$.

By Theorem~\ref{thm:skewpieri}, now we expand $\dI_{(2)}\cdot\dI_{(1,2,1)/(1,1)}$ by finding all valid pairs $(\beta,\tau)$ such that $|\beta/\tau|=4$. Thus \begin{equation*}
    \begin{aligned}
    \dI_{(2)}\cdot\dI_{(1,2,1)/(1,1)}
    &=c^{(1,2,1)}_{0,(1,1,1,1)}\dI_{(1,1,1,1)}+c^{(1,2,1)}_{0,(1,1,2)}\dI_{(1,1,2)}\\&+c^{(1,2,1)}_{0,(1,2,1)}\dI_{(1,2,1)}
    +c^{(1,2,1)}_{0,(2,1,1)}\dI_{(2,1,1)}\\&-c^{(1,2,1)}_{1,(1,1,1,2)}\dI_{(1,1,1,2)/(1)}-c^{(1,2,1)}_{1,(1,1,2,1)}\dI_{(1,1,2,1)/(1)}\\
    &-c^{(1,2,1)}_{1,(1,2,1,1)}\dI_{(1,2,1,1)/(1)}-c^{(1,2,1)}_{1,(2,1,1,1)}\dI_{(2,1,1,1)/(1)}\\&-c^{(1,2,1)}_{1,(1,1,3)}\dI_{(1,1,3)/(1)}
    -c^{(1,2,1)}_{1,(1,2,2)}\dI_{(1,2,2)/(1)}\\&-c^{(1,2,1)}_{1,(1,3,1)}\dI_{(1,3,1)/(1)}-c^{(1,2,1)}_{1,(2,1,2)}\dI_{(2,1,2)/(1)}\\
    &-c^{(1,2,1)}_{1,(2,2,1)}\dI_{(2,2,1)/(1)}-c^{(1,2,1)}_{1,(3,1,1)}\dI_{(3,1,1)/(1)}\\&+c^{(1,2,1)}_{2,(1,1,1,3)}\dI_{(1,1,1,3)/(1,1)}+c^{(1,2,1)}_{2,(1,1,2,2)}\dI_{(1,1,2,2)/(1,1)}\\&+c^{(1,2,1)}_{2,(1,1,3,1)}\dI_{(1,1,3,1)/(1,1)}+c^{(1,2,1)}_{2,(1,2,1,2)}\dI_{(1,2,1,2)/(1,1)}\\&+c^{(1,2,1)}_{2,(1,2,2,1)}\dI_{(1,2,2,1)/(1,1)}+c^{(1,2,1)}_{2,(1,3,1,1)}\dI_{(1,3,1,1)/(1,1)}\\&+c^{(1,2,1)}_{2,(2,1,1,2)}\dI_{(2,1,1,2)/(1,1)}+c^{(1,2,1)}_{2,(2,1,2,1)}\dI_{(2,1,2,1)/(1,1)}\\&+c^{(1,2,1)}_{2,(2,2,1,1)}\dI_{(2,2,1,1)/(1,1)}+c^{(1,2,1)}_{2,(3,1,1,1)}\dI_{(3,1,1,1)/(1,1)}\\&+c^{(1,2,1)}_{2,(1,1,4)}\dI_{(1,1,4)/(1,1)}+c^{(1,2,1)}_{2,(1,2,3)}\dI_{(1,2,3)/(1,1)}\\&+c^{(1,2,1)}_{2,(1,3,2)}\dI_{(1,3,2)/(1,1)}+c^{(1,2,1)}_{2,(1,4,1)}\dI_{(1,4,1)/(1,1)}\\
    &+c^{(1,2,1)}_{2,(2,1,3)}\dI_{(2,1,3)/(1,1)}+c^{(1,2,1)}_{2,(2,2,2)}\dI_{(2,2,2)/(1,1)}\\&+c^{(1,2,1)}_{2,(2,3,1)}\dI_{(2,3,1)/(1,1)}+c^{(1,2,1)}_{2,(3,1,2)}\dI_{(3,1,2)/(1,1)}\\&+c^{(1,2,1)}_{2,(3,2,1)}\dI_{(3,2,1)/(1,1)}+c^{(1,2,1)}_{2,(4,1,1)}\dI_{(4,1,1)/(1,1)}.
    \end{aligned}
\end{equation*}
We can compute all the coefficients $c^{\alpha}_{|\beta|-|\alpha|,\beta}$ and most of them turn out to be zero. Hence we have the following expansion after simplification.
\begin{equation*}
    \begin{aligned}
    \dI_{(2)}\cdot\dI_{(1,2,1)/(1,1)}
    &=\dI_{(1,2,1)}-\dI_{(1,1,2,1)/(1)}-\dI_{(2,2,1)/(1)}+\dI_{(2,1,2,1)/(1,1)}\\
    &+\svw{\dI_{(3,2,1)/(1,1)}}
    \end{aligned}
\end{equation*}
\end{example}

\bibliographystyle{plain}
\bibliography{refsra}

\end{document}